\newtheorem{theorem}{Theorem}[section]
\newtheorem{proposition}[theorem]{Proposition}
\newtheorem{lemma}[theorem]{Lemma}
\def\R{{\mathbb R}}
\def\N{{\mathbb N}}
\DeclareMathOperator*{\argmin}{argmin}
\newcounter{ct}
\begin{document}

\title{Barycenters in uniformly convex geodesic spaces}
\author{Lauren\c{t}iu Leu\c{s}tean${}^{1,2}$,  Adriana Nicolae${}^{3}$, Alexandru Zaharescu${}^{2,4}$ \\[0.2cm]
\footnotesize ${}^1$ Faculty of Mathematics and Computer Science, University of Bucharest,\\
\footnotesize Academiei 14,  P. O. Box 010014, Bucharest, Romania\\[0.1cm]
\footnotesize ${}^2$ Simion Stoilow Institute of Mathematics of the Romanian Academy,\\
\footnotesize P. O. Box 1-764, RO-014700 Bucharest, Romania\\[0.1cm]
\footnotesize ${}^3$ Department of Mathematics, Babe\c{s}-Bolyai University, \\
\footnotesize  Kog\u{a}lniceanu 1, 400084 Cluj-Napoca, Romania\\[0.1cm]
\footnotesize ${}^4$ Department of Mathematics, University of Illinois at Urbana-Champaign,\\
\footnotesize 1409 W. Green Street, Urbana, IL 61801, USA\\[1mm]
\footnotesize E-mails:  laurentiu.leustean@unibuc.ro, anicolae@math.ubbcluj.ro, zaharesc@illinois.edu
}

\date{}

\maketitle

\begin{abstract}
This note proves a result on the existence of barycenters in a class of uniformly convex geodesic spaces.
\end{abstract}

\section{Introduction}

Let $(X,d)$ be a metric space. For $1\leq \theta< \infty$, denote by $\mathcal{P}^\theta(X)$ the set of all probability measures $P$ on $(X,\mathcal{B}(X))$ which satisfy
\begin{equation}
\int_X d^\theta(x, y)d(Py) <\infty\quad \text{for some (and hence any) } x\in X.\label{def-Ptheta}
 \end{equation}
We will mainly be interested in the cases $\theta=1$ and $\theta=2$.

Let $P\in \mathcal{P}^1(X)$ and fix $a\in X$. Following \cite{Stu03}, define the function
\[f_a:X\to \R, \quad f_a(x)=\int_X \big(d^2(x,y)-d^2(y,a)\big)d(Py).\]
The fact that $\displaystyle\int_X \big(d^2(x,y)-d^2(y,a)\big)d(Py)<\infty$ for all $x\in X$ is immediate, hence $f_a$ is well-defined. Furthermore, $f_a$ is continuous. 

In \cite{Stu03}, Sturm proved the following result.

\begin{theorem}\cite[Proposition 4.3]{Stu03}\label{thm-bary-Sturm}\\
Let $(X,d)$ be a complete CAT(0) space and $P\in \mathcal{P}^1(X)$. Then for all $a\in X$, the function $f_a$ has a unique minimum point which does not depend on $a$, is called the {\em barycenter} of $P$ and is denoted by $b(P)$. Thus, 
\[b(P)=\argmin_{x\in X}\int_X \big(d^2(x,y)-d^2(y,a)\big)d(Py).\]
If $P\in \mathcal{P}^2(X)$, then 
\[b(P)=\argmin_{x\in X}\int_X d^2(x,y)d(Py).\]
\end{theorem}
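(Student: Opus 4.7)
The plan is to exploit the CAT(0) midpoint inequality, which states that for any $x_0, x_1, y \in X$ and the midpoint $m$ of the geodesic from $x_0$ to $x_1$,
\[d^2(m,y) \leq \tfrac{1}{2}d^2(x_0,y) + \tfrac{1}{2}d^2(x_1,y) - \tfrac{1}{4}d^2(x_0,x_1).\]
Integrating this pointwise bound against $P$ and subtracting the $a$-dependent term (which does not involve $x$) yields a uniform convexity inequality for $f_a$:
\[f_a(m) \leq \tfrac{1}{2}f_a(x_0) + \tfrac{1}{2}f_a(x_1) - \tfrac{1}{4}d^2(x_0,x_1).\]
This inequality is the engine that drives both uniqueness and existence of the minimum.

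First I would verify, as the excerpt claims is immediate, that $f_a$ is well-defined and continuous. Writing $d^2(x,y) - d^2(y,a) = (d(x,y)-d(y,a))(d(x,y)+d(y,a))$ and applying $|d(x,y)-d(y,a)| \leq d(x,a)$ bounds the absolute value of the integrand by $d(x,a)(d(x,a)+2d(y,a))$, which is $P$-integrable because $P \in \mathcal{P}^1(X)$; the same type of estimate yields local Lipschitz continuity of $f_a$ in $x$.

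Uniqueness of the minimizer then follows immediately from the integrated convexity inequality: two distinct minimizers $x_0 \neq x_1$ would force $f_a(m) < \inf f_a$, a contradiction. For existence, I would take a minimizing sequence $(x_n)$ with $f_a(x_n) \to c := \inf f_a$, apply the same inequality to the pair $(x_n, x_m)$, and use $f_a(m_{n,m}) \geq c$ to deduce
\[\tfrac{1}{4}d^2(x_n,x_m) \leq \tfrac{1}{2}f_a(x_n) + \tfrac{1}{2}f_a(x_m) - c \to 0,\]
so $(x_n)$ is Cauchy. Completeness of $X$ supplies a limit, and continuity of $f_a$ shows that this limit attains the infimum.

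Independence of the minimizer from $a$ is a direct observation: $f_a(x) - f_b(x) = \int_X (d^2(y,b) - d^2(y,a)) d(Py)$ does not involve $x$, so $f_a$ and $f_b$ differ by an additive constant and share the same $\argmin$. Finally, if $P \in \mathcal{P}^2(X)$, then $g(x) := \int_X d^2(x,y) d(Py)$ is finite for every $x$ and differs from $f_a$ by the additive constant $\int_X d^2(y,a) d(Py)$, giving the second formula for $b(P)$. I expect the main technical obstacle to be the preliminary integrability and continuity bounds needed to pass from the pointwise CAT(0) inequality to its integrated form; once those are in hand, the uniform convexity argument makes uniqueness, existence, and the $a$-independence essentially formal.
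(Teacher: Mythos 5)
Your proof is correct, and it is essentially Sturm's original argument for the CAT(0) case; but it is a genuinely different route from the one this paper takes. The paper never reproves Theorem \ref{thm-bary-Sturm} directly --- it cites \cite{Stu03} and instead derives it as a special case of Theorem \ref{barycenters-ucmpm-P1}, whose proof runs through Proposition \ref{prop-reflexive-min}: existence of the minimizer comes from \emph{reflexivity} (the nested sublevel sets $C_n=\{f_a\le\alpha_n\}$ are nonempty, bounded, closed and convex, so their intersection is nonempty), and uniqueness comes from \emph{strict quasi-convexity}, established via the non-explicit modulus $\Phi(r,\varepsilon)>0$ of Proposition \ref{ucmpm-M1-Phireps-geq0} together with Lemma \ref{prop-inf-Phi}. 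You instead use the quantitative Bruhat--Tits inequality $d^2(m,y)\le\frac12 d^2(x_0,y)+\frac12 d^2(x_1,y)-\frac14 d^2(x_0,x_1)$, integrate it to get $f_a(m)\le\frac12 f_a(x_0)+\frac12 f_a(x_1)-\frac14 d^2(x_0,x_1)$, and conclude that a minimizing sequence is Cauchy. Your approach is more elementary and self-contained in the CAT(0) setting (no reflexivity machinery, and it yields a quantitative variance-type inequality for free), but it depends crucially on the explicit quadratic convexity defect $\frac14 d^2(x_0,x_1)$, which is exactly what is unavailable in the paper's general uniformly convex setting: there the defect $\Phi(r(y),\varepsilon(y))$ degenerates as $r(y)\to\infty$ and cannot be uniformly bounded below along an arbitrary minimizing sequence, so the Cauchy argument breaks down and reflexivity must substitute for it. One small point you should make explicit: your Cauchy estimate requires $c=\inf f_a>-\infty$, and the bound you state, $|d^2(x,y)-d^2(y,a)|\le d(x,a)(d(x,a)+2d(y,a))$, only bounds $f_a(x)$ below by a quantity depending on $x$. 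The uniform lower bound follows from $d(x,y)\ge d(x,a)-d(a,y)$, which gives $f_a(x)\ge d^2(x,a)-2d(x,a)\int_X d(a,y)\,d(Py)\ge-\bigl(\int_X d(a,y)\,d(Py)\bigr)^2$; this is a one-line fix, not a flaw in the strategy.
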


Barycenters in geodesic spaces have been studied by various authors assuming different regularity conditions on the space. For instance, Ohta \cite{Oht12} considered proper Alexandrov spaces of curvature bounded below, while Kell \cite{Kell14} and Kuwae \cite{Kuw14} imposed certain uniform convexity assumptions. Other notions of barycenters and applications thereof to ergodic theory were given, for example, by Austin \cite{Aus11} and Navas \cite{Nav13}.

In this note we prove that Theorem \ref{thm-bary-Sturm} can be extended to the context of geodesic spaces satisfying a uniform convexity condition which is more general than the ones considered in \cite{Kell14,Kuw14}.

\section{Preliminaries}

Let $(X,d)$ be a metric space.  A {\it geodesic} in $X$ is a mapping  $\gamma:[0,l]\to X$ satisfying $d(\gamma(s),\gamma(t))=|s-t|$
for all $s,t\in [0,l]$. A {\it geodesic segment} in $X$ is the image $\gamma([0,l])$ of a geodesic $\gamma:[0,l]\to X$. If 
$\gamma(0)=x$ and $\gamma(l)=y$, we say that the geodesic $\gamma$ or that the geodesic segment $\gamma([0,l])$ {\it joins x and y}. $X$ is said to be a {\it (uniquely) geodesic space} if every two points are joined by a (unique) geodesic. For any $x,y \in X$, a point  belongs to a geodesic segment that joins $x$ and $y$ if and only if there exists $t \in [0,1]$ such that $d(z,x) = td(x,y)$ and $d(z,y) = (1-t)d(x,y)$. A midpoint of $x$ and $y$ is a point, denoted by $m(x,y)$, satisfying $\displaystyle d(m(x,y),x)=d(m(x,y),y)=(1/2) d(x,y)$. Note that if $X$ is a complete metric space for which every two points have a midpoint, then $X$ is a geodesic space. See, for instance, \cite{BriHae99} for more details on geodesic spaces and the notions discussed below.

In the rest of the paper we assume that $(X,d)$ is a uniquely geodesic space, even if not mentioned explicitly.  It follows that every two points $x,y$ of $X$ have a unique midpoint $m(x,y)$.

We say that the metric $d:X \times X \to \R$ is {\it convex} if
\[\quad  d(m(x,y),a) \leq \frac12\left(d(x,a)+d(y,a)\right) \quad\text{ for all } a,x,y\in X.\]
In this case $X$ is also called a geodesic space with convex metric.

A nonempty subset $C\subseteq X$ is said to be {\em convex} if $m(x,y)\in C$ for all $x, y\in C$. If $C$ is a convex set, then a mapping $f:C\to \R$ is {\it quasi-convex} if 
\[ f(m(x,y))\le \max\{f(x),f(y)\}\quad \text{for all }x, y\in C.\]
If strict inequality holds above for any $x, y \in C$, $x \ne y$, then $f$ is {\it strictly quasi-convex}.

$X$ is said to be {\it reflexive} if the intersection of any descending sequence of nonempty, bounded, closed and convex subsets of it is nonempty.  

$X$ is {\it uniformly convex}  \cite{Leu07} if there exists $\eta:(0,\infty)\times(0,2]\rightarrow (0,1]$  such that for any $r>0$ and $\varepsilon\in(0,2]$  and 
for all $a,x,y\in X$,
\begin{equation}\label{eq-def-uc}
\left.\begin{array}{l}
d(x,a)\le r\\
d(y,a)\le r\\
d(x,y)\ge\varepsilon r
\end{array}
\right\}
\quad \Rightarrow  \quad d\left(m(x,y),a\right)\le (1-\eta(r,\varepsilon))r. 
\end{equation}
Such a mapping $\eta$ is called a {\em modulus of uniform convexity}.  We say that $\eta$ is {\it monotone} (resp. {\it lower semi-continuous from the right}) if for every fixed $\varepsilon$ it decreases (resp. is lower semi-continuous from the right) with respect to $r$. Note that any complete uniformly convex geodesic space which admits a monotone (or lower semi-continuous from the right) modulus of uniform convexity is reflexive (see \cite{KohLeu10}). Moreover, one can define uniform convexity in geodesic spaces without assuming a priori uniqueness of geodesics between any two points by supposing that \eqref{eq-def-uc} holds for all midpoints of $x$ and $y$. However, it is easy to see that this implies that the space is uniquely geodesic. A discussion on other particular notions of uniform convexity in metric spaces can be found in \cite{LeuNic15}.

Uniformly convex geodesic spaces are a natural generalization of both uniformly convex Banach spaces and CAT(0) spaces. 
In fact, as pointed out in \cite{Leu07},  these spaces  admit moduli of uniform convexity that do not depend on $r$. Another class of uniformly convex geodesic spaces are the so-called {\em uniform Busemann spaces} defined by Jost \cite[Definition 2.2.6, p.50]{Jos97}. Recall that a {\it Busemann space} is a geodesic space $(X,d)$ satisfying
\[d\left(\gamma_1(l_1/2),\gamma_2(l_2/2)\right)\le\frac12 \left(d\left(\gamma_1(0),\gamma_2(0)\right) + d\left(\gamma_1(l_1),\gamma_2(l_2)\right)\right)\]
for all geodesics $\gamma_1 : [0,l_1] \to X$ and $\gamma_2 : [0,l_2] \to X$. Any Busemann space is uniquely geodesic and its metric is convex. We refer to \cite{Pap05} for an extensive study of these spaces. A Busemann space $(X,d)$ is said to be {\it uniform} if there exists a strictly increasing function $\alpha:[0,\infty)\to[0,\infty)$ with $\alpha(0)=0$ such that for all $a,x,y\in X$,
\[d^2\left(m(x,y),a\right) \leq \frac12 d^2(x,a)+\frac12 d^2(y,a)-\alpha(d(x,y)).\]
It is well-known that any CAT$(0)$ space is a uniform Busemann space. One can easily see that any uniform Busemann space is a uniformly convex geodesic space with a lower semi-continuous from the right
modulus of uniform convexity given by $\eta(r,\varepsilon):=\alpha(\varepsilon r)/(2r^2)$.

\subsection{Some technical results}

Let  $(X,d)$ be a uniquely geodesic space with convex metric. Define
\[S:X^3\to [0,\infty), \quad S(a,x,y)=\frac12d^2(x,a)+\frac12d^2(y,a)-d^2(m(x,y),a).\]
Note that $S$ is nonnegative by the convexity of the metric. For $r>0$ and $\varepsilon\in(0,2]$, define
\[A_{r,\varepsilon}  = \{(a,x,y)\in X^3\mid d(x,a)\leq r, d(y,a)\leq r \text{ and } d(x,y)\geq \varepsilon r\}\]
and 
\[\Phi(r,\varepsilon)=\inf\{S(a,x,y) \mid (a,x,y)\in A_{r,\varepsilon}\} \ge 0.\]
Hence, for all $(a,x,y)\in A_{r,\varepsilon}$, 
\begin{equation}\label{ineq-uc-Phi}
d^2(m(x,y),a)\leq \frac12d^2(x,a)+\frac12d^2(y,a)-\Phi(r,\varepsilon).
\end{equation}

\begin{lemma}\label{prop-inf-Phi}
Let $0<r\leq s$ and $0<\varepsilon\leq \delta\leq 2$. Then
\begin{enumerate}
\item \label{basic-Areps-Phireps-1} $A_{r,\delta}\subseteq A_{r,\varepsilon}$, hence $\Phi(r,\varepsilon)\leq \Phi(r,\delta)$.
\item \label{basic-Areps-Phireps-2} $ A_{r,\varepsilon}\subseteq A_{s,\frac{\varepsilon r}s}$, hence $\Phi(r,\varepsilon)\geq \Phi\left(s,\frac{\varepsilon r}s\right)$.
\item \label{basic-Areps-Phireps-3} Let $I=[r,s] \times [\varepsilon,2]$. Then 
$\Phi(r_1,\varepsilon_1)\geq \Phi\left(s,\frac{\varepsilon r}{s}\right)$ for all $(r_1,\varepsilon_1)\in I$.
\end{enumerate}
\end{lemma}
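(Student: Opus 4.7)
The three parts form a short monotonicity chain for the modulus of convexity, with the third part following by combining the first two. My plan is to handle the set inclusions directly from the defining inequalities, and then obtain the $\Phi$-inequalities by the standard rule that shrinking the domain of an infimum can only increase it.

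For part \eqref{basic-Areps-Phireps-1}, I would take $(a,x,y)\in A_{r,\delta}$ and simply observe that $d(x,y)\ge\delta r\ge\varepsilon r$, while the two bounds $d(x,a)\le r$ and $d(y,a)\le r$ carry over unchanged; this gives $A_{r,\delta}\subseteq A_{r,\varepsilon}$, and the infimum of $S$ over the smaller set is at least the infimum over the larger one, yielding $\Phi(r,\varepsilon)\le\Phi(r,\delta)$.

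Part \eqref{basic-Areps-Phireps-2} is similar: for $(a,x,y)\in A_{r,\varepsilon}$, using $r\le s$ gives $d(x,a)\le r\le s$ and $d(y,a)\le r\le s$, while $d(x,y)\ge\varepsilon r=\frac{\varepsilon r}{s}\cdot s$, so $(a,x,y)\in A_{s,\varepsilon r/s}$; note that $\varepsilon r/s\in(0,2]$ since $\varepsilon\le 2$ and $r\le s$, so the second argument is in the admissible range. The inclusion of sets then yields $\Phi(r,\varepsilon)\ge\Phi(s,\varepsilon r/s)$.

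For part \eqref{basic-Areps-Phireps-3}, I would chain the previous two. Let $(r_1,\varepsilon_1)\in[r,s]\times[\varepsilon,2]$. Since $r_1\le s$, part \eqref{basic-Areps-Phireps-2} (applied with $(r,\varepsilon)$ replaced by $(r_1,\varepsilon_1)$) gives
\[\Phi(r_1,\varepsilon_1)\ge\Phi\!\left(s,\tfrac{\varepsilon_1 r_1}{s}\right).\]
Because $\varepsilon_1\ge\varepsilon$ and $r_1\ge r$, we have $\varepsilon_1 r_1/s\ge\varepsilon r/s$, and both values lie in $(0,2]$, so part \eqref{basic-Areps-Phireps-1} (applied at fixed radius $s$) gives $\Phi(s,\varepsilon_1 r_1/s)\ge\Phi(s,\varepsilon r/s)$. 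Concatenating the two inequalities yields the desired conclusion. There is no genuine obstacle here; the only thing to be mildly careful about is verifying that the rescaled second arguments stay within $(0,2]$ so that $\Phi$ is evaluated at legal points.
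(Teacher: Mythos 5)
Your proof is correct and follows essentially the same route as the paper: parts (i) and (ii) are treated as immediate consequences of the set inclusions, and part (iii) is obtained by chaining (ii) applied to $(r_1,\varepsilon_1)$ with (i) applied at radius $s$, exactly as in the paper's argument. Your extra check that the rescaled second arguments remain in $(0,2]$ is a harmless added detail.
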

\begin{proof}
\eqref{basic-Areps-Phireps-1} and \eqref{basic-Areps-Phireps-2} are immediate, so we only prove \eqref{basic-Areps-Phireps-3}. Let $(r_1,\varepsilon_1)\in I$. Since $r_1\leq s$, we apply \eqref{basic-Areps-Phireps-2} to conclude that 
$\displaystyle \Phi(r_1,\varepsilon_1)\geq \Phi\left(s,\frac{\varepsilon_1 r_1}{s}\right)$. Since $r\leq r_1$ and 
$\varepsilon\leq \varepsilon_1$, we have that $\displaystyle \frac{\varepsilon_1 r_1}{s}\geq \frac{\varepsilon r}{s}$. 
Applying now \eqref{basic-Areps-Phireps-1} 
we get that $\displaystyle \Phi\left(s,\frac{\varepsilon_1 r_1}{s}\right)\geq \Phi\left(s,\frac{\varepsilon r}{s}\right)$. Thus, $\displaystyle \Phi(r_1,\varepsilon_1)\geq \Phi\left(s,\frac{\varepsilon r}{s}\right)$.
\end{proof}

In \cite[Theorem 2.3]{KhaKan11}, the function $\Phi$ has been studied for a special class of uniformly convex geodesic spaces, but the proof goes through unchanged in our more general setting. In particular, we get the following result.

\begin{proposition}\label{ucmpm-M1-Phireps-geq0}
Let $X$  be a uniformly convex geodesic space with convex metric. Then $\Phi(r,\varepsilon)>0$ for all  $r>0, \varepsilon\in(0,2]$.
\end{proposition}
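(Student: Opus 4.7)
The plan is to derive a pointwise inequality $S(a,x,y)\ge \eta_1^2 u^2$ for each $(a,x,y)\in A_{r,\varepsilon}$, where $u=\max\{d(x,a),d(y,a)\}$ and $\eta_1=\eta(u,d(x,y)/u)$, and then use $u\ge \varepsilon r/2$ to pass to a uniform positive lower bound on $\Phi(r,\varepsilon)$.

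Fix $(a,x,y)\in A_{r,\varepsilon}$ and set $u=d(x,a)\ge v=d(y,a)$ (WLOG), $w=d(x,y)$, $m=d(m(x,y),a)$. The triangle inequality combined with $w\ge \varepsilon r$ gives $u\ge w/2\ge \varepsilon r/2$, while $u\le r$ and $w\le u+v\le 2u$ yield $w/u\le 2$. Applying uniform convexity at radius $u$ with parameter $w/u$ produces $m\le (1-\eta_1)u$, where $\eta_1=\eta(u,w/u)>0$.

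The heart of the argument is a short case analysis. If $v\ge (1-2\eta_1)u$, combine $\tfrac{u^2+v^2}{2}\ge \tfrac{1+(1-2\eta_1)^2}{2}u^2$ with $m^2\le (1-\eta_1)^2 u^2$; the elementary identity $\tfrac{1+(1-2\eta_1)^2}{2}-(1-\eta_1)^2=\eta_1^2$ then yields $S(a,x,y)\ge \eta_1^2 u^2$. Otherwise, the convexity of the metric gives $m\le (u+v)/2$, whence $S(a,x,y)\ge \tfrac{(u-v)^2}{4} > \tfrac{(2\eta_1 u)^2}{4}=\eta_1^2 u^2$. In either case $S(a,x,y)\ge \eta_1^2 u^2 \ge \eta_1^2\,\varepsilon^2 r^2/4$.

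The main obstacle is the passage from a pointwise to a uniform positive bound: $\eta_1=\eta(u,w/u)$ depends on the configuration as $u\in[\varepsilon r/2,r]$ and $w/u\in[\varepsilon r/u,2]$. One may WLOG take the modulus to be non-decreasing in its second argument, giving $\eta(u,w/u)\ge \eta(u,\varepsilon r/u)$; under the monotonicity-in-$r$ convention of the paper this further reduces to $\eta(r,\varepsilon)$, producing the clean bound $\Phi(r,\varepsilon)\ge \eta(r,\varepsilon)^2\,\varepsilon^2 r^2/4>0$. This is precisely the step at which the proof of \cite[Theorem 2.3]{KhaKan11} does its work, and where its transfer to the present, more general setting is noted to go through unchanged.
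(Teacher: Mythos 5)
Your pointwise estimate is essentially sound and is the right kind of argument. Two small repairs are needed. First, in your first case the passage from $v\ge(1-2\eta_1)u$ to $v^2\ge(1-2\eta_1)^2u^2$ is only valid when $\eta_1\le 1/2$; if $\eta_1>1/2$ the squared inequality can fail. This is harmless --- replace $\eta_1$ by $\min\{\eta_1,1/2\}$ throughout (a smaller modulus is still a modulus), and the conclusion becomes $S(a,x,y)\ge\min\{\eta_1,1/2\}^2u^2$, which is all you need. Second, you can avoid the ``WLOG non-decreasing in the second argument'' manoeuvre entirely: since $u\le r$ and $d(x,y)\ge\varepsilon r\ge\varepsilon u$, you may apply uniform convexity at radius $u$ with parameter $\varepsilon$ itself, getting $m\le(1-\eta(u,\varepsilon))u$ directly; the only remaining nonuniformity is then in the first argument $u\in[\varepsilon r/2,r]$.

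That remaining nonuniformity is where the genuine gap sits, and you have in effect acknowledged it rather than closed it. Proposition \ref{ucmpm-M1-Phireps-geq0} as stated assumes only that $X$ is uniformly convex with convex metric; monotonicity of $\eta$ in $r$ is an \emph{additional} property introduced in the paper and hypothesised only later, in Theorem \ref{barycenters-ucmpm-P1} (and even there only as one of two alternatives, the other being lower semi-continuity from the right, for which your reduction $\eta(u,\varepsilon)\ge\eta(r,\varepsilon)$ does not apply). Without some such regularity, $\inf\{\eta(u,\varepsilon):u\in[\varepsilon r/2,r]\}$ need not be positive, and your chain of inequalities does not produce a uniform lower bound; nothing in the bare definition \eqref{eq-def-uc} rules this out. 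So what you have proved is the Proposition under the extra hypothesis of a monotone modulus --- which does suffice for the paper's application --- but not the Proposition as literally stated. For comparison, the paper gives no argument at all here: it defers entirely to \cite[Theorem 2.3]{KhaKan11} with the remark that the proof ``goes through unchanged,'' and that source works with the optimal modulus of the space and its monotonicity-type properties. Your write-up is a reasonable explicit reconstruction of that transfer, but you should either add the monotone-modulus hypothesis to the statement you are proving, or supply the argument that lets one dispense with it; invoking ``the monotonicity-in-$r$ convention of the paper'' at the decisive step leaves exactly the point at issue unproved.
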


\section{Main result}

In this section we prove the following generalization of Theorem \ref{thm-bary-Sturm}.

\begin{theorem}\label{barycenters-ucmpm-P1}
Let $(X,d)$ be a complete uniformly convex geodesic space with convex metric which admits a monotone or lower semi-continuous from the right modulus of uniform convexity and  let $P\in \mathcal{P}^1(X)$. Then for all $a\in X$, the function
\begin{equation}
f_a:X\to \R, \quad f_a(x)=\int_X \big(d^2(x,y)-d^2(y,a)\big)d(Py) \label{def-f-P1X-barycenter-uc}
\end{equation}
has a unique minimum point which does not depend on $a$, is called the \emph{barycenter} of $P$ and 
is denoted by $b(P)$. If $P\in \mathcal{P}^2(X)$, then 
\[
b(P)=\argmin_{x\in X}\int_X d^2(x,y)d(Py) 
\]
\end{theorem}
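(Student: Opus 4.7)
The plan is to adapt Sturm's CAT(0) proof, using the uniform convexity inequality \eqref{ineq-uc-Phi} together with Proposition~\ref{ucmpm-M1-Phireps-geq0} in place of the parallelogram identity available in CAT(0) spaces. Four ingredients are needed: (a) a strict midpoint inequality for $f_a$, which yields uniqueness; (b) coercivity of $f_a$, which forces minimizing sequences to be bounded; (c) a Cauchy argument using the uniform convexity estimates of Lemma~\ref{prop-inf-Phi}, which yields existence; and (d) the observation that $f_a-f_{a'}$ is a constant, which gives independence from $a$ and the $\mathcal{P}^2(X)$ reformulation.

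The key convexity estimate comes from integrating the pointwise inequality $d^2(m(x_1,x_2),y)\le \tfrac12 d^2(x_1,y)+\tfrac12 d^2(x_2,y)-S(y,x_1,x_2)$ against $P$, producing
\[
f_a(m(x_1,x_2)) \le \tfrac12\bigl(f_a(x_1)+f_a(x_2)\bigr) - \int_X S(y,x_1,x_2)\, d(Py).
\]
By Proposition~\ref{ucmpm-M1-Phireps-geq0}, whenever $x_1\ne x_2$ the integrand is strictly positive at every $y$, so the integral is strictly positive and $f_a$ is strictly midpoint-convex. Uniqueness of a minimum follows at once, since two distinct minimizers would produce a strictly smaller value at their midpoint.

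The main obstacle is existence. Coercivity is straightforward: the reverse triangle inequality gives $d^2(x,y)-d^2(y,a)\ge d(x,a)^2-2d(x,a)d(y,a)$, hence $f_a(x)\ge d(x,a)^2-2Cd(x,a)$ with $C:=\int_X d(y,a)\,d(Py)<\infty$. In particular $f_a$ is bounded below and any minimizing sequence $(x_n)$ satisfies $d(x_n,a)\le R$ for some $R>0$. Applying the midpoint inequality to $x_n,x_k$ and using $f_a(m(x_n,x_k))\ge \inf f_a$ forces $\int_X S(y,x_n,x_k)\,d(Py)\to 0$. Suppose, for contradiction, that $d(x_n,x_k)\ge \delta>0$ along some subsequence. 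For every $y\in X$ the triple $(y,x_n,x_k)$ belongs to $A_{R+d(y,a),\,\delta/(R+d(y,a))}$, and for any $y$ with $d(y,a)\le M$ Lemma~\ref{prop-inf-Phi}\eqref{basic-Areps-Phireps-2} yields
\[
S(y,x_n,x_k) \ge \Phi\!\left(R+d(y,a),\,\tfrac{\delta}{R+d(y,a)}\right) \ge \Phi\!\left(R+M,\,\tfrac{\delta}{R+M}\right).
\]
Choosing $M$ so large that $P(\{y:d(y,a)\le M\})\ge 1/2$ and invoking Proposition~\ref{ucmpm-M1-Phireps-geq0} to ensure $\Phi(R+M,\delta/(R+M))>0$, the integrals $\int_X S(y,x_n,x_k)\,d(Py)$ are bounded below by a positive constant independent of $n,k$, a contradiction. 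Hence $(x_n)$ is Cauchy, converges by completeness to some $x^*\in X$, and $f_a(x^*)=\inf f_a$ by continuity. The independence of $b(P)$ from $a$ follows because $f_a(x)-f_{a'}(x)=\int_X\bigl(d^2(y,a')-d^2(y,a)\bigr)\,d(Py)$ is constant in $x$, and the $\mathcal{P}^2(X)$ statement follows because then $f_a(x)$ differs from $\int_X d^2(x,y)\,d(Py)$ by the finite constant $-\int_X d^2(y,a)\,d(Py)$.
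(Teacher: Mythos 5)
Your proof is correct, and its existence part takes a genuinely different route from the paper's. The paper reduces existence to an abstract minimization principle (Proposition \ref{prop-reflexive-min}): since the space is reflexive --- this is the only place where the monotone or lower semi-continuous from the right modulus enters, via the result cited from \cite{KohLeu10} --- a quasi-convex, lower semi-continuous function satisfying \eqref{eq-prop-reflexive-min} attains its minimum, so all the work goes into proving strict quasi-convexity of $f_a$ using Lemma \ref{prop-inf-Phi}.\eqref{basic-Areps-Phireps-3} and Proposition \ref{ucmpm-M1-Phireps-geq0}. You instead run a direct Cauchy argument on a minimizing sequence, closer to Sturm's original CAT(0) proof: coercivity bounds the sequence, the midpoint inequality forces $\int_X S(y,x_n,x_k)\,d(Py)\to 0$, and the uniform lower bound $S(y,x_n,x_k)\ge\Phi\left(R+M,\delta/(R+M)\right)>0$ on a set of $P$-measure at least $1/2$ (from Lemma \ref{prop-inf-Phi}.\eqref{basic-Areps-Phireps-2} and Proposition \ref{ucmpm-M1-Phireps-geq0}) rules out $d(x_n,x_k)\ge\delta$ along a subsequence; completeness and continuity of $f_a$ finish. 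What each approach buys: yours is self-contained, bypasses reflexivity entirely, and as written never invokes the monotonicity/lower semi-continuity hypothesis on the modulus (only completeness, convexity of the metric, Lemma \ref{prop-inf-Phi} and Proposition \ref{ucmpm-M1-Phireps-geq0}), so it in fact yields the conclusion under nominally weaker hypotheses; the paper's route is shorter once reflexivity is granted and isolates a reusable minimization principle. A minor stylistic difference in the uniqueness step: you deduce positivity of $\int_X S(y,x_1,x_2)\,d(Py)$ from the fact that a measurable, everywhere positive function has positive integral against a probability measure, which is valid, whereas the paper extracts an explicit positive lower bound on a ball $B(a,R)$ of positive measure --- essentially the same estimate you need anyway in your Cauchy step.
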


The main instrument in obtaining the unique minimum point of $f_a$ given by \eqref{def-f-P1X-barycenter-uc} is the following result (see also \cite[Proposition 2.3]{Leu10}). For completeness, we briefly sketch its proof.

\begin{proposition}\label{prop-reflexive-min}
Let $C$ be a nonempty, closed and convex subset of a reflexive geodesic space $X$, $f:C\to\R$ be quasi-convex 
and lower semi-continuous. Assume moreover that for all sequences $(x_n)$ in $C$,
\begin{equation}
\text{if~} \displaystyle \lim_{n\to\infty} d(x_n,p)=\infty \text{~for some~} p\in X,  \text{~then~} (f(x_n))  \text{ is not bounded above}.
\label{eq-prop-reflexive-min}
\end{equation}
Then $f$ attains its minimum on $C$. If, in addition, $f$ is strictly quasi-convex, then $f$ attains its minimum at exactly one point.
\end{proposition}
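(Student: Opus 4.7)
The plan is to realize the minimum as a point sitting inside the intersection of a suitable descending family of sublevel sets, exploiting the abstract reflexivity property of $X$. First, for each $\lambda \in \R$ I would consider the sublevel set $C_\lambda = \{x \in C : f(x) \leq \lambda\}$ and verify three properties. Closedness follows from lower semi-continuity of $f$; convexity follows from quasi-convexity, since if $x,y \in C_\lambda$ then $f(m(x,y)) \leq \max\{f(x),f(y)\} \leq \lambda$; and boundedness follows from the coercivity assumption \eqref{eq-prop-reflexive-min} by contraposition: an unbounded sequence in $C_\lambda$ would admit a subsequence $(x_{n_k})$ with $d(x_{n_k},p) \to \infty$ for some $p \in X$, which by hypothesis would force $f(x_{n_k})$ to be unbounded above, contradicting $f(x_{n_k}) \leq \lambda$.

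With those three properties established, I would set $\alpha = \inf_{x \in C} f(x)$ and pick a decreasing sequence $\lambda_n \searrow \alpha$ (taking $\lambda_n = -n$ in the a priori possible case $\alpha = -\infty$); by definition of the infimum each $C_{\lambda_n}$ is nonempty. The family $(C_{\lambda_n})$ is then a descending sequence of nonempty, bounded, closed and convex subsets of $X$, so reflexivity yields a point $x_0 \in \bigcap_n C_{\lambda_n}$. That point satisfies $f(x_0) \leq \lambda_n$ for every $n$, hence $f(x_0) \leq \alpha$; since $f$ is $\R$-valued, this forces $\alpha$ to be finite and $x_0$ to attain the minimum of $f$ on $C$.

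For the uniqueness statement under strict quasi-convexity, if $x_0 \neq x_1$ were two minima then $f(m(x_0,x_1)) < \max\{f(x_0),f(x_1)\} = \alpha$, directly contradicting the definition of $\alpha$. I do not anticipate a genuine obstacle here: the only mildly delicate point is translating the coercivity clause \eqref{eq-prop-reflexive-min} (stated on sequences escaping to infinity) into boundedness of the sublevel sets, and after that the argument is driven entirely by the reflexivity property built into the hypothesis of the proposition.
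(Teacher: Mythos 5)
Your proposal is correct and follows essentially the same route as the paper: both realize the minimizer as a point in the intersection of a descending sequence of sublevel sets $\{x \in C : f(x) \le \alpha_n\}$ with $\alpha_n \searrow \inf_C f$, using lower semi-continuity for closedness, quasi-convexity for convexity, the coercivity hypothesis for boundedness, and reflexivity for nonemptiness of the intersection. The extra details you supply (the contrapositive argument for boundedness and the case $\alpha=-\infty$) are exactly the points the paper leaves implicit in its sketch.
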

\begin{proof}
Let $\displaystyle \alpha = \inf_{x \in C} f(x)$ and $(\alpha_n)$ be a strictly decreasing sequence of real numbers which tends to $\alpha$. For $n \in \N$, define $C_n = \left\{x \in C : f(x) \le \alpha_n\right\}$. One can see that $(C_n)$ is a decreasing sequence of nonempty, bounded, closed and convex subsets of $X$. Thus, $\bigcap_{n \in \N}C_n \ne \emptyset$, so there exists $x^* \in C$ with $f(x^*) = \alpha \in \R$. Uniqueness of the minimum point follows immediately when $f$ is strictly quasi-convex.
\end{proof}

\subsection{Proof of Theorem \ref{barycenters-ucmpm-P1}}

We remark first that if a minimum point of $f_a$ exists, then it is independent of $a$, since for arbitrary $b\in X$, the function $f_a - f_b$ is constant. 

In order to get the existence of a unique minimum point of $f_a$, we use Proposition \ref{prop-reflexive-min}. It is easy to see that $f_a$ is 
continuous and that \eqref{eq-prop-reflexive-min} holds. 
It remains to show that $f_a$ is strictly quasi-convex. Fix $x_0,z_0 \in X$ with $x_0 \ne z_0$. For all $y\in X$, let $r(y):=\max\{d(x_0,y),d(z_0,y)\}>0$ and 
$\displaystyle\varepsilon(y):=\frac{d(x_0,z_0)}{r(y)}\leq 2$.  We get that 
\begin{align*}
f_a(m(x_0,z_0)) &= \int_X \left(d^2(m(x_0,z_0),y)-d^2(y,a)\right)d(Py)\\
&\le  \int_X \left(\frac12 d^2(x_0,y)+\frac12 d^2(z_0,y)-
\Phi(r(y),\varepsilon(y))-d^2(y,a) \right) d(Py) \\
& \quad \text{since } (y,x_0,z_0)\in A_{r(y),\varepsilon(y)}, \text{ so we can apply  \eqref{ineq-uc-Phi}}\\
&= \frac{1}{2}\left(f_a(x_0)+f_a(z_0)\right) - \int_X \Phi(r(y),\varepsilon(y)) d(Py)\\
&\leq \max\{f_a(x_0),f_a(z_0)\} - \int_X \Phi(r(y),\varepsilon(y)) d(Py).
\end{align*}
Since $\displaystyle X=\bigcup_{r>0} B(a,r)$ and $P(X)=1$, there exists $R > 0$ with $P(B(a,R)) > 0$. By Proposition \ref{ucmpm-M1-Phireps-geq0}, $\Phi(r(y),\varepsilon(y))>0$ for all $y\in X$. Hence, to obtain strict quasi-convexity of $f_a$, it is enough to show that 
$$\int_{B(a,R)} \Phi(r(y),\varepsilon(y))d(Py) > 0.$$ 
To this end, let $y\in B(a,R)$. It follows that $d(x_0,y)\leq d(x_0,a)+d(a,y)\leq d(x_0,a)+R$ and, similarly, $d(z_0,y)\leq d(z_0,a)+d(a,y)\leq d(z_0,a)+R$. Denoting 
$$s:=\max\{d(x_0,a),d(z_0,a)\}+R,$$ 
we have that $r(y) \le s$.  Then, for all $y\in B(a,R)$,
\[ 0< \frac{d(x_0,z_0)}2\le r(y)\le s \quad\text{and} \quad 0<\frac{d(x_0,z_0)}{s} \le \varepsilon(y) \le 2.\]
Using Lemma \ref{prop-inf-Phi}.\eqref{basic-Areps-Phireps-3}, we conclude that 
\begin{equation} 
\Phi(r(y),\varepsilon(y))\ge \Phi\left(s,\frac{d^2(x_0,z_0)}{2s^2}\right). \label{phi-ineq-1}
\end{equation}
Since \eqref{phi-ineq-1} holds for all $y\in B(a,R)$, it follows that 
\[\int_{B(a,R)}\Phi(r(y),\varepsilon(y)) d(Py)\ge P(B(a,R))\,\Phi\!\left(s,\frac{d^2(x_0,z_0)}{2s^2}\!\right)>0.\]
Hence, we can apply Proposition \ref{prop-reflexive-min} to get the existence of $b(P)$.

Finally, for $P\in  \mathcal{P}^2(X)$ let $g:X\to [0,+\infty)$ be defined by $$\displaystyle g(x)=\int_X d^2(x,y)d(Py).$$ 
Then $f_a-g$ is constant, so clearly $b(P)$ is the unique minimum point of $g$. $\hfill\Box$

\mbox{ } 

\noindent
{\bf Acknowledgements:} \\[1mm] 
Lauren\c tiu Leu\c stean was supported by a grant of the Romanian 
National Authority for Scientific Research, CNCS - UEFISCDI, project 
number PN-II-ID-PCE-2011-3-0383. \\
Lauren\c tiu Leu\c stean and Alexandru Zaharescu are grateful to Florin Boca for helpful discussions on the subject of this paper.

\end{document}